\newtheorem{prop}{Proposition}
\newtheorem{lemma}[prop]{Lemma}
\newtheorem{theorem}[prop]{Theorem}
\theoremstyle{definition}
\newtheorem{definition}[prop]{Definition}
\newtheorem{remark}[prop]{Remark}
\newtheorem{example}[prop]{Example}
\newcommand{\seqnum}[1]{\href{https://oeis.org/#1}{\rm \underline{#1}}}
\newcommand{\mylabel}[2]{#2\def\@currentlabel{#2}\label{#1}}
\begin{document}
\tikzset{mystyle/.style={matrix of nodes,
        nodes in empty cells,
        row 1/.style={nodes={draw=none}},
        row sep=-\pgflinewidth,
        column sep=-\pgflinewidth,
        nodes={draw,minimum width=1cm,minimum height=1cm,anchor=center}}}
\tikzset{mystyleb/.style={matrix of nodes,
        nodes in empty cells,
        row sep=-\pgflinewidth,
        column sep=-\pgflinewidth,
        nodes={draw,minimum width=1cm,minimum height=1cm,anchor=center}}}

\title{Staircase graph words}

\author[SELA FRIED]{Sela Fried$^{\dagger}$}
\thanks{$^{\dagger}$ Department of Computer Science, Israel Academic College,
52275 Ramat Gan, Israel.
\\
\href{mailto:friedsela@gmail.com}{\tt friedsela@gmail.com}}
\author[TOUFIK MANSOUR]{Toufik Mansour$^{\sharp}$}
\thanks{$^{\sharp}$ Department of Mathematics, University of Haifa, 3103301 Haifa,
Israel.\\
\href{mailto:tmansour@univ.haifa.ac.il}{\tt tmansour@univ.haifa.ac.il}}

\maketitle

\begin{abstract}
Generalizing the notion of staircase words, introduced by Knopfmacher et.\ al, we define staircase graph words. These are functions $w$ from the vertex set $V$ of a graph into the set $\{1,2,\ldots,k\}$, such that $|w(x)-w(y)|\leq 1$, for every adjacent $x,y\in V$. We find the explicit generating functions for the number of staircase graph words for the grid graph, the rectangle-triangular graph and the king's graph, all of size $2\times n$.
\bigskip

\noindent \textbf{Keywords:} Staircase word, generating function, kernel method.
\smallskip

\noindent
\textbf{Math.~Subj.~Class.:} 68R05, 05A05, 05A15.
\end{abstract}

\section{Introduction}
Let $k$ and $n$ be two positive integers and let $[k]=\{1,2,\ldots,k\}$ be an alphabet. A word over $[k]$ (of length $n$) is merely an element of $[k]^n$. Restricted words are words that do not contain certain subwords. Since the work of Burstein \cite{B}, that may be regarded as the first  systematic study of restricted words, much research has been devoted to the study of this subject.

In this work we concentrate on a specific kind of restricted words, that was introduced by Knopfmacher et al.\ \cite{K}, namely \emph{staircase words}. These are words $x=x_1\cdots x_n\in[k]^n$ such that $|x_i-x_{i+1}|\leq 1$, for every $1\leq i\leq n-1$. We propose the following generalization.

\begin{definition}\label{def;1}
Let $G$ be a graph with vertex set $V$. A \emph{$(G,k)$-word} is any function $w\colon V\to [k]$. A $(G,k)$-word $w$ is called \emph{staircase} if $|w(x)-w(y)|\leq 1$, for every adjacent $x,y\in V$. The number of staircase $(G,k)$-words is denoted by $s_k(G)$.
\end{definition}

\begin{example} A staircase word of length $n$ (in the sense of \cite{K}) is a $(P_n,k)$-staircase word (in the sense of Definition \ref{def;1}), where $P_n$ is the path graph of size $n$. Knopfmacher et al.\ have shown in \cite[Theorem 2.2]{K} that the generating function for the number of staircase words of length $n$ is given by $$1+\frac{x(k-(3k+2)x)}{(1-3x)^2}+\frac{2x^2}{(1-3x)^2}\frac{1+U_{k-1}\left(\frac{1-x}{2x}\right)}{U_k\left(\frac{1-x}{2x}\right)},$$ where $U_k(x)$ is the Chebyshev polynomial of the second kind (of degree $k$).

Moreover, Knopfmacher et al.\ \cite{K} have also considered \emph{staircase-cyclic words}. These are staircase words $x=x_1\cdots x_n$ such that $|x_1-x_n|\leq 1$. In our terminology, these are $(C_n,k)$-words, where $C_n$ is the cycle graph of size $n$. 
\end{example}

In this work we concentrate on the grid graph, the rectangle-triangular graph, and the king's graph, all of size $2\times n$, defined as follows (see Figure \ref{fig;1} below for a visualization).
\begin{definition}
Let $V=\{(i,j)\;:\;i=1,2 \textnormal{ and } 1\leq j\leq n\}$.
The \emph{grid graph of size $2\times n$}, denoted by $P_2\times P_n$, is the graph whose vertex set is $V$ and two vertices $(i_1,j_1),(i_2,j_2)\in V$ are adjacent if $|i_1-i_2|+|j_1-j_2|=1$.
The \emph{rectangle-triangular graph of size $2\times n$}, denoted by $RT_{2,n}$, is the graph whose vertex set is $V$ and two vertices $(i_1,j_1),(i_2,j_2)\in V$ are adjacent if (a) $|i_1-i_2|+|j_1-j_2|=1$, or (b) $i_1=1, i_2=2$ and $j_2 = j_1 +1$, or (c) $i_1=2, i_2=1$ and $j_2 = j_1 -1$.
The \emph{king's graph of size $2\times n$}, denoted by $KG_{2,n}$ (cf.~\cite[p.~223]{C}, is the graph whose vertex set is $V$ and two vertices $(i_1,j_1),(i_2,j_2)\in V$ are adjacent if (a) $|i_1-i_2|+|j_1-j_2|=1$, or (b) $|i_1-i_2|=|j_1-j_2|=1$.
\end{definition}

\begin{remark}
Let $G$ be one of the graphs defined above with the vertex set $V$. It will be convenient to think of $V$ as the index set of a $2\times n$ matrix. In particular, $(1,1)$ corresponds to the position of the upper left entry of the matrix. It follows that, in this work, every $(G,k)$-word corresponds to a $2\times n$ matrix, whose entries belong to $[k]$.  We shall make extensive use of the following refinement of $s_k(G)$: For $i,j\in[k]$, we denote by $s_k(G,i,j)$ the number of staircase $(G, k)$-words whose first column is $(i,j)^T$, where $v^T$ stands for the  transpose of the column vector $v$.
\end{remark}

\begin{figure}[H]
\centering
\begin{subfigure}{\textwidth}\centering
\scalebox{0.6}{
\begin{tikzpicture}[shorten >=0pt,node distance=3cm,auto]
\node[draw,circle,inner sep=0.25cm, fill=black!5] (1)  at (-4, 0) [thick] {};
\node[draw,circle,inner sep=0.25cm, fill=black!5] (2)  at (-4, -2) [thick] {};
\node[draw,circle,inner sep=0.25cm, fill=black!5] (3)  at (-2, 0) [thick] {};
\node[draw,circle,inner sep=0.25cm, fill=black!5] (4)  at (-2, -2) [thick] {};
\node[draw,circle,inner sep=0.25cm, fill=black!5] (5)  at (0, 0) [thick] {};
\node[draw,circle,inner sep=0.25cm, fill=black!5] (6)  at (0, -2) [thick] {};
\node[draw,circle,inner sep=0.25cm, fill=black!5] (7)  at (2, 0) [thick] {};
\node[draw,circle,inner sep=0.25cm, fill=black!5] (8)  at (2, -2) [thick] {};
\node[draw,circle,inner sep=0.25cm, fill=black!5] (9)  at (4, 0) [thick] {};
\node[draw,circle,inner sep=0.25cm, fill=black!5] (10) at (4, -2) [thick] {};

\path (1) edge[thick,-] node[above] {} (2);
\path (3) edge[thick,-] node[above] {} (4);
\path (5) edge[thick,-] node[above] {} (6);
\path (7) edge[thick,-] node[above] {} (8);
\path (9) edge[thick,-] node[above] {} (10);
\path (1) edge[thick,-] node[above] {} (3);
\path (3) edge[thick,-] node[above] {} (5);
\path (5) edge[thick,-] node[above] {} (7);
\path (7) edge[thick,-] node[above] {} (9);
\path (2) edge[thick,-] node[above] {} (4);
\path (4) edge[thick,-] node[above] {} (6);
\path (6) edge[thick,-] node[above] {} (8);
\path (8) edge[thick,-] node[above] {} (10);
\end{tikzpicture}}
\caption{The grid graph $P_2\times P_5.$}
\label{fig:1}
\end{subfigure}
\begin{subfigure}{\textwidth}\centering
\scalebox{0.6}{
\begin{tikzpicture}[shorten >=0pt,node distance=3cm,auto]
\node[draw,circle,inner sep=0.25cm, fill=black!5] (1)  at (-4, 0) [thick] {};
\node[draw,circle,inner sep=0.25cm, fill=black!5] (2)  at (-4, -2) [thick] {};
\node[draw,circle,inner sep=0.25cm, fill=black!5] (3)  at (-2, 0) [thick] {};
\node[draw,circle,inner sep=0.25cm, fill=black!5] (4)  at (-2, -2) [thick] {};
\node[draw,circle,inner sep=0.25cm, fill=black!5] (5)  at (0, 0) [thick] {};
\node[draw,circle,inner sep=0.25cm, fill=black!5] (6)  at (0, -2) [thick] {};
\node[draw,circle,inner sep=0.25cm, fill=black!5] (7)  at (2, 0) [thick] {};
\node[draw,circle,inner sep=0.25cm, fill=black!5] (8)  at (2, -2) [thick] {};
\node[draw,circle,inner sep=0.25cm, fill=black!5] (9)  at (4, 0) [thick] {};
\node[draw,circle,inner sep=0.25cm, fill=black!5] (10) at (4, -2) [thick] {};

\path (1) edge[thick,-] node[above] {} (2);
\path (3) edge[thick,-] node[above] {} (4);
\path (5) edge[thick,-] node[above] {} (6);
\path (7) edge[thick,-] node[above] {} (8);
\path (9) edge[thick,-] node[above] {} (10);
\path (1) edge[thick,-] node[above] {} (3);
\path (3) edge[thick,-] node[above] {} (5);
\path (5) edge[thick,-] node[above] {} (7);
\path (7) edge[thick,-] node[above] {} (9);
\path (2) edge[thick,-] node[above] {} (4);
\path (4) edge[thick,-] node[above] {} (6);
\path (6) edge[thick,-] node[above] {} (8);
\path (8) edge[thick,-] node[above] {} (10);

\path (1) edge[thick,-] node[above] {} (4);
\path (3) edge[thick,-] node[above] {} (6);
\path (5) edge[thick,-] node[above] {} (8);
\path (7) edge[thick,-] node[above] {} (10);
\end{tikzpicture}}
\caption{The rectangle-triangular graph $RT_{2,5}.$}
\label{fig;2}
\end{subfigure}
\begin{subfigure}{\textwidth}\centering
\scalebox{0.6}{
\begin{tikzpicture}[shorten >=0pt,node distance=3cm,auto]
\node[draw,circle,inner sep=0.25cm, fill=black!5] (1)  at (-4, 0) [thick] {};
\node[draw,circle,inner sep=0.25cm, fill=black!5] (2)  at (-4, -2) [thick] {};
\node[draw,circle,inner sep=0.25cm, fill=black!5] (3)  at (-2, 0) [thick] {};
\node[draw,circle,inner sep=0.25cm, fill=black!5] (4)  at (-2, -2) [thick] {};
\node[draw,circle,inner sep=0.25cm, fill=black!5] (5)  at (0, 0) [thick] {};
\node[draw,circle,inner sep=0.25cm, fill=black!5] (6)  at (0, -2) [thick] {};
\node[draw,circle,inner sep=0.25cm, fill=black!5] (7)  at (2, 0) [thick] {};
\node[draw,circle,inner sep=0.25cm, fill=black!5] (8)  at (2, -2) [thick] {};
\node[draw,circle,inner sep=0.25cm, fill=black!5] (9)  at (4, 0) [thick] {};
\node[draw,circle,inner sep=0.25cm, fill=black!5] (10) at (4, -2) [thick] {};

\path (1) edge[thick,-] node[above] {} (2);
\path (3) edge[thick,-] node[above] {} (4);
\path (5) edge[thick,-] node[above] {} (6);
\path (7) edge[thick,-] node[above] {} (8);
\path (9) edge[thick,-] node[above] {} (10);
\path (1) edge[thick,-] node[above] {} (3);
\path (3) edge[thick,-] node[above] {} (5);
\path (5) edge[thick,-] node[above] {} (7);
\path (7) edge[thick,-] node[above] {} (9);
\path (2) edge[thick,-] node[above] {} (4);
\path (4) edge[thick,-] node[above] {} (6);
\path (6) edge[thick,-] node[above] {} (8);
\path (8) edge[thick,-] node[above] {} (10);

\path (1) edge[thick,-] node[above] {} (4);
\path (3) edge[thick,-] node[above] {} (6);
\path (5) edge[thick,-] node[above] {} (8);
\path (7) edge[thick,-] node[above] {} (10);

\path (2) edge[thick,-] node[above] {} (3);
\path (4) edge[thick,-] node[above] {} (5);
\path (6) edge[thick,-] node[above] {} (7);
\path (8) edge[thick,-] node[above] {} (9);
\end{tikzpicture}}
\caption{The king's graph $KG_{2,5}.$}
\label{fig:3}
\end{subfigure}
\caption{Examples of the three graph families considered in this work.}\label{fig;1}
\end{figure}
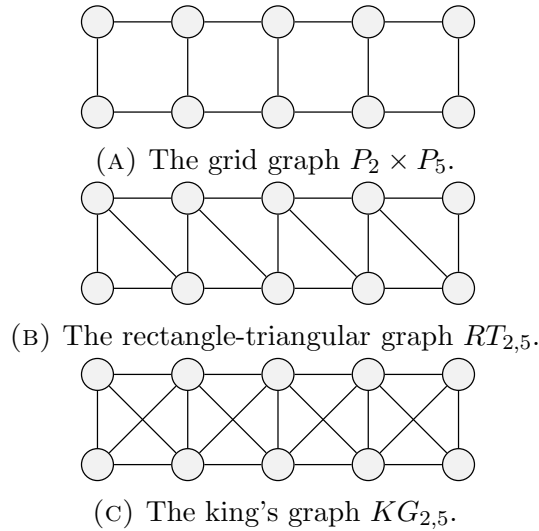 \noindent

\begin{example}
Table \ref{table} below shows the numbers of staircase $(G_n,3)$-words for $n=1,2,\ldots,7$, where $G_n$ is either $P_2\times P_n, RT_{2,n}$ or $KG_{2,n}$. The last column refers to the On-Line Encyclopedia of Integer Sequences (OEIS) \cite{OL}.
\begin{table}[H]
\begin{center}
\begin{tabular}{||c| c |c|c|c|c|c|c|c||}
 \hline
 $n$ & $1$& $2$& $3$& $4$ & $5$&  $6$ & $7$ & OEIS \\ [0.5ex]
 \hline\hline
 $s_3(P_2\times P_n)$  & $7$&$35$&$181$&$933$&$4811$&$24807$&$127913$& \seqnum{A051926} \\
\hline
$s_3(RT_{2,n})$ &$7$& $33$& $161$& $783$& $3809$& $18529$& $90135$ & not registered \\
 \hline
 $s_3(KG_{2,n})$ & $7$& $31$& $145$& $673$& $3127$& $14527$& $67489$ & \seqnum{A086901} \\
    \hline
\end{tabular}
\caption{Number of staircase graph-words corresponding to the graphs considered in this work, over an alphabet of size $3$.}\label{table}
\end{center}
\end{table}
\end{example}

\section{Main results}
We apply the kernel method (e.g., \cite{P}). The order in which the graphs are studied is according to the complexity of the analysis, beginning with the easiest graph, namely, the king's graph.

\subsection{\texorpdfstring{The king's graph}{}}
Let $S_k(x)=\sum_{n\geq1}s_k(KG_{2,n})x^n$ be the generating function of the number of staircase $(KG_{2,n}, k)$-words and, for $i,j\in[k]$, we define  $S_k(x,i,j)=\sum_{n\geq1}s_k(KG_{2,n},i,j)x^n$ to be the generating function of the number of staircase $(KG_{2,n}, k)$-words whose first column is $(i,j)^T$. We set $S_k(x,i,j)=0$ if either $i\notin [k]$ or $j\notin [k]$.

\begin{lemma}\label{lem;1a}
\begin{enumerate}
\item  We have
\begin{align}
S_k(x)=\sum_{i=1}^kS_k(x,i,i)+2\sum_{i=1}^{k-1}S_k(x,i+1,i).\label{eq1a}
\end{align}
    \item The generating function $S_k(x,i,i)$ satisfies
\begin{align}
S_k(x,i,i)&=x+xS_k(x,i-1,i-1)+2xS_k(x,i,i-1)\nonumber\\
&+xS_k(x,i,i)+2xS_k(x,i+1,i)+xS_k(x,i+1,i+1).\label{eq;1a}
\end{align}
\item The generating function $S_k(x,i+1,i)$ satisfies
\begin{align}
S_k(x,i+1,i)&=x+xS_k(x,i,i)+2xS_k(x,i+1,i)+xS_k(x,i+1,i+1).\label{eq;5a}
\end{align}
\end{enumerate}
\end{lemma}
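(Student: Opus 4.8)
The plan is to derive the three identities by a combinatorial decomposition of a staircase $(KG_{2,n},k)$-word according to its first column. Recall that such a word is a $2\times n$ matrix with entries in $[k]$, and in the king's graph $KG_{2,n}$ every pair of entries lying in two consecutive columns is adjacent (the two entries within a column, the two ``horizontal'' pairs, and the two ``diagonal'' pairs), so the staircase condition between column $\ell$ and column $\ell+1$ says precisely that all four entries of those two columns are pairwise within distance $1$. Equivalently, writing $a,b$ for the top/bottom entries of column $\ell$ and $c,d$ for those of column $\ell+1$, we need $|a-b|\le 1$, $|c-d|\le 1$, $|a-c|\le 1$, $|b-d|\le 1$, $|a-d|\le 1$, $|b-c|\le 1$; the last two are the extra diagonal constraints that distinguish $KG$ from the grid.

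For part (1), I would simply partition all staircase words by their first column $(i,j)^T$. The within-column constraint forces $|i-j|\le 1$, so either $i=j$ (giving the terms $S_k(x,i,i)$), or $i=j+1$, or $i=j-1$. By the top-bottom symmetry of the king's graph (reflecting a word across its horizontal axis is a bijection on staircase words), $S_k(x,i+1,i)=S_k(x,i,i+1)$, which accounts for the factor $2$ in front of $\sum_{i=1}^{k-1}S_k(x,i+1,i)$; summing over all admissible first columns yields \eqref{eq1a}.

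For parts (2) and (3), I would condition on the second column. A word counted by $S_k(x,i,i)$ either has length $1$ (contributing the term $x$), or has a second column $(a,b)^T$ from which the rest of the word, read starting at column $2$, is itself a staircase $(KG_{2,n-1},k)$-word with first column $(a,b)^T$; this contributes $x\,S_k(x,a,b)$. So I must enumerate exactly which $(a,b)$ are compatible with a first column $(i,i)^T$: we need $|a-b|\le 1$, $|a-i|\le 1$, $|b-i|\le 1$ (using all of the vertical, horizontal and diagonal edges, which here collapse to requiring both $a$ and $b$ to lie in $\{i-1,i,i+1\}$ and to differ by at most $1$). Enumerating: $(i-1,i-1),(i,i-1),(i-1,i),(i,i),(i+1,i),(i,i+1),(i+1,i+1)$ — seven possibilities, and applying the reflection symmetry $S_k(x,i,i-1)=S_k(x,i-1,i)$ and $S_k(x,i+1,i)=S_k(x,i,i+1)$ collapses these to the six terms on the right of \eqref{eq;1a} with the indicated coefficients ($1$ for each diagonal-equal column, $2$ for each adjacent-pair column, $1$ for the repeated column). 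For \eqref{eq;5a}, the same argument with first column $(i+1,i)^T$ requires the second column $(a,b)^T$ to satisfy $a,b\in\{i-1,i,i+1,i+2\}$ together with $|a-b|\le1$, $|a-(i+1)|\le1$, $|b-i|\le1$, $|a-i|\le1$, $|b-(i+1)|\le1$; the last four together force $a,b\in\{i,i+1\}$, leaving exactly $(i,i),(i+1,i),(i,i+1),(i+1,i+1)$, which after the reflection identification gives the four terms of \eqref{eq;5a}. Boundary cases where $i\pm1\notin[k]$ are handled automatically by the convention $S_k(x,i,j)=0$ outside the range. I expect the only real care needed is the bookkeeping of exactly which second columns are admissible and the correct multiplicities after folding in the top-bottom symmetry; the diagonal edges of the king's graph make that list short, which is why this graph is the easiest of the three.
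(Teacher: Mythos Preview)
Your proposal is correct and follows essentially the same approach as the paper's own proof: partition by the first column and use the top-bottom reflection symmetry for part~(1), and condition on the second column (distinguishing the case $n=1$) for parts~(2) and~(3), listing exactly the same admissible second columns. Your write-up is in fact more detailed than the paper's, which simply states the symmetry $s_k(KG_{2,n},i,j)=s_k(KG_{2,n},j,i)$ for part~(1) and lists the allowed second columns without further justification for parts~(2) and~(3).
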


\begin{proof}
\begin{enumerate}
\item This is clear, since $s_k(KG_{2,n},i,j)=s_k(KG_{2,n},j,i)$.
\item Let $w$ be a staircase $KG_{2,n}$-word whose first column is $(i,i)^T$. Either $n=1$, or $n\geq2$. In the latter case, the second column of $w$ must be one of
$$(i-1,i-1)^T,(i,i-1)^T,(i-1,i)^T,(i,i)^T,(i+1,i)^T,(i,i+1)^T,(i+1,i+1)^T.$$ Writing this in terms of generating functions, we obtain (\ref{eq;1a}).
\item Similar to the previous case, only, this time, the second column of $w$ must be one of
\[
(i,i)^T,(i+1,i)^T,(i,i+1)^T,(i+1,i+1)^T.\qedhere\]
\end{enumerate}
\end{proof}

Equation (\ref{eq1a}) motivates the definition of two additional generating functions (in the variables $x$ and $t$):
$$A_k(x,t)=\sum_{i=1}^kS_k(x,i,i)t^{i-1}\mbox{ and }B_k(x,t)=\sum_{i=1}^{k-1}S_k(x,i+1,i)t^{i-1}.$$

\begin{lemma}\label{lem;3a}
We have
\begin{align}
A_k(x,1)&=\frac{x(2A_{k}(x,0)x+2S_{k}(x,k,k)x-2kx+A_{k}(x,0)+S_{k}(x,k,k)-k+4x)}{2x^{2}+5x-1},\label{eq;9a}\\
B_k(x,1)&=-\frac{x(A_{k}(x,0)x+S_{k}(x,k,k)x-kx-A_{k}(x,0)-S_{k}(x,k,k)+k+3x-1)}{2x^{2}+5x-1}.\label{eq;9b}
\end{align}
Furthermore,
\begin{equation}\label{eq;gz}
    S_k(x) = \frac{x(3A_{k}(x,0)+3S_{k}(x,k,k)-3k-2x+2)}{2x^{2}+5x-1}
\end{equation}
\end{lemma}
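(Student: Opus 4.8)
The plan is to derive all three identities simultaneously by summing the recurrences of Lemma~\ref{lem;1a} against the weight $t^{i-1}$ and then specializing $t=1$. First I would multiply (\ref{eq;1a}) by $t^{i-1}$ and sum over $i=1,\ldots,k$; each term on the right-hand side becomes a shifted version of $A_k$ or $B_k$. Concretely, $\sum_i S_k(x,i-1,i-1)t^{i-1}=t\,A_k(x,t)-t^k S_k(x,k,k)$ (the subtracted term accounts for the out-of-range index $i=k+1$ appearing after the shift, which vanishes by our convention $S_k(x,\cdot,k+1)=0$), while $\sum_i S_k(x,i+1,i+1)t^{i-1}=t^{-1}(A_k(x,t)-S_k(x,1,1))$ and similarly $\sum_{i} S_k(x,i,i-1)t^{i-1}=t\,B_k(x,t)$ and $\sum_i S_k(x,i+1,i)t^{i-1}=B_k(x,t)$, after matching indices carefully. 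The constant term $\sum_i x t^{i-1}=x\frac{1-t^k}{1-t}$ also appears. Doing the same with (\ref{eq;5a}) gives a second functional equation relating $B_k(x,t)$ to $A_k(x,t)$.

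The key simplification is to then set $t=1$. This collapses all the geometric sums to $k$ (from $\sum_{i=1}^k 1 = k$) and all the shifted $A_k$-sums to $A_k(x,1)$ up to boundary corrections involving only $S_k(x,1,1)$ and $S_k(x,k,k)$; by the symmetry $s_k(KG_{2,n},i,j)=s_k(KG_{2,n},j,i)$ and the labeling conventions one has $A_k(x,0)=S_k(x,1,1)$, so the boundary data is exactly the pair $A_k(x,0)$, $S_k(x,k,k)$ that appears in the statement. I expect the $t=1$ specialization of the $A$-equation to read, schematically, $A_k(x,1)=k x + x\bigl(2A_k(x,1)+2B_k(x,1)\bigr)+x\bigl(2A_k(x,1)-A_k(x,0)-S_k(x,k,k)\bigr)$ (the last parenthesis being the two "diagonal-shift" terms with their boundary corrections), and the $B$-equation to read $B_k(x,1)=(k-1)x+x A_k(x,1)+2x B_k(x,1)+x\bigl(A_k(x,1)-S_k(x,k,k)\bigr)$ or a close variant. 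This is a linear $2\times 2$ system in the unknowns $A_k(x,1)$ and $B_k(x,1)$ with coefficients in $\mathbb{Z}[x]$ and right-hand side affine in $A_k(x,0),S_k(x,k,k),k$; solving it by Cramer's rule produces (\ref{eq;9a}) and (\ref{eq;9b}), the common denominator $2x^2+5x-1$ being the determinant of the coefficient matrix. Finally, (\ref{eq;gz}) follows by plugging $A_k(x,1)$ and $B_k(x,1)$ into $S_k(x)=A_k(x,1)+2B_k(x,1)$, which is just (\ref{eq1a}) rewritten in terms of $A_k,B_k$, and simplifying.

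The main obstacle is purely bookkeeping: getting the index shifts and the resulting boundary correction terms exactly right, since an off-by-one in which of $S_k(x,1,1)$ or $S_k(x,k,k)$ (or both) survives will change the affine right-hand side and hence the final formulas. I would double-check these by verifying the $k=1$ and $k=2$ cases by hand against direct enumeration (for $k=1$ every column must be $(1,1)^T$, so $S_1(x)=x/(1-x)$, and the formulas must reproduce this with $A_1(x,0)=S_1(x,1,1)=x/(1-x)$, $B_1\equiv 0$), and by checking that the $k=3$ specialization reproduces the sequence $7,31,145,673,\ldots$ from Table~\ref{table}. Once the functional equations are correctly set up, the remaining algebra is a routine linear solve that can be left to the reader or relegated to a computer algebra system.
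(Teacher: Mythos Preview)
Your approach is essentially identical to the paper's: multiply \eqref{eq;1a} and \eqref{eq;5a} by $t^{i-1}$, sum to obtain functional equations for $A_k(x,t)$ and $B_k(x,t)$, specialize $t\to 1$, solve the resulting $2\times 2$ linear system, and then combine via $S_k(x)=A_k(x,1)+2B_k(x,1)$. Your schematic $t=1$ equations have a couple of coefficient slips (the correct versions are $A_k(x,1)=kx+3xA_k(x,1)+4xB_k(x,1)-xA_k(x,0)-xS_k(x,k,k)$ and $B_k(x,1)=(k-1)x+2xA_k(x,1)+2xB_k(x,1)-xA_k(x,0)-xS_k(x,k,k)$), but you already flag this bookkeeping as the main risk, and your proposed sanity checks would catch it.
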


\begin{proof}
Multiplying (\ref{eq;1a}) by $t^{i-1}$ and summing over $i=1,2,\ldots,k$, we obtain
\begin{align}
A_k(x,t)&=\frac{x(1-t^k)}{1-t}+xt(A_k(x,t)-S_k(x,k,k)t^{k-1})+2xtB_k(x,t)\nonumber\\
&+xA_k(x,t)+2xB_k(x,t)+\frac{x}{t}(A_k(x,t)-A_k(x,0))\label{eq;20a}.
\end{align} Similarly, multiplying (\ref{eq;5a}) by $t^{i-1}$ and summing over $i=1,2,\ldots,k-1$, we obtain
\begin{align}
B_{k}(x,t)&=\frac{x(1-t^{k-1})}{1-t}+x\left(A_{k}(x,t)-S_{k}(x,k,k)t^{k-1}\right)\nonumber\\&+2xB_{k}(x,t)+\frac{x}{t}\left(A_{k}(x,t)-A_{k}(x,0)\right)\label{eq;25a}.
\end{align} Taking $\lim_{t\to 1}$ in \eqref{eq;20a} and \eqref{eq;25a}, we obtain a linear system of two equations in the variables $A_k(x,1)$ and $B_k(x,1)$. Solving this system gives \eqref{eq;9a} and \eqref{eq;9b}.

To obtain \eqref{eq;gz}, notice that equation \eqref{eq1a} is equivalent to $S_k(x) = A_k(x,1) + 2B_k(x,1)$. The assertion follows now from the previous part.
\end{proof}

\begin{theorem}\label{mth1}
We have
\begin{align*}
S_k(x)&=\frac{x(t_{1}+2x)(3kt_{1}+2t_{1}x-3k-2t_{1}-2x-4)t_{1}^{k}}{(1-5x-2x^{2})(t_1^{k+1}+2t_1^{k}x+2t_1x+1)(t_1-1)}\\
&\qquad\qquad+\frac{(2t_{1}x+1)(3kt_{1}+2t_{1}x-3k+4t_{1}-2x+2)}{(1-5x-2x^{2})(t_1^{k+1}+2t_1^{k}x+2t_1x+1)(t_1-1)},
\end{align*} where $t_1 = \frac{1-3x-2x^2-\sqrt{(1-3x-2x^2)^{2}-4x^2}}{2x}$.
\end{theorem}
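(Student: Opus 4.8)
The plan is to pin down the two unknown series $A_k(x,0)$ and $S_k(x,k,k)$ appearing in \eqref{eq;gz} via the kernel method, and then substitute back. First I would eliminate $B_k(x,t)$ between the functional equations \eqref{eq;20a} and \eqref{eq;25a}: since the coefficient of $B_k(x,t)$ in \eqref{eq;25a} is $1-2x$, one solves \eqref{eq;25a} for $B_k(x,t)$ and inserts the result into \eqref{eq;20a}, obtaining a single equation of the shape
\[
K(x,t)\,A_k(x,t)=R\bigl(x,t,A_k(x,0),S_k(x,k,k)\bigr),
\]
whose right-hand side is explicit, rational in $t$ (with denominators $1-t$ and $1-2x$) and linear in the two unknown series. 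A direct computation shows that, after clearing the factor $t(1-2x)$, the kernel $K(x,t)$ equals, up to sign, $xt^{2}-(1-3x-2x^2)t+x$; its two roots are $\tfrac{1-3x-2x^2\pm\sqrt{(1-3x-2x^2)^{2}-4x^2}}{2x}$ and their product is $1$, so exactly one of them, namely $t_1$, is a formal power series in $x$ (in fact $t_1=x+O(x^2)$), and hence $A_k(x,t_1)$ is a well-defined power series.

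Next I would run the kernel method: substituting $t=t_1$ annihilates the left-hand side, forcing $R\bigl(x,t_1,A_k(x,0),S_k(x,k,k)\bigr)=0$, which is one linear relation between $A_k(x,0)$ and $S_k(x,k,k)$. To close the system I would use the alphabet-reversal symmetry $j\mapsto k+1-j$: it preserves the condition $|w(x)-w(y)|\le 1$ and maps a staircase $(KG_{2,n},k)$-word with first column $(1,1)^{T}$ bijectively onto one with first column $(k,k)^{T}$, so $S_k(x,1,1)=S_k(x,k,k)$; since $A_k(x,0)=S_k(x,1,1)$, this gives $A_k(x,0)=S_k(x,k,k)$. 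Feeding this into the kernel relation leaves a single linear equation in the one unknown $s:=S_k(x,k,k)=A_k(x,0)$. Solving it, and using the identities $t_1^{k}+t_1^{-1}=t_1^{-1}(t_1^{k+1}+1)$ and $t_1^{k-1}+t_1^{-1}=t_1^{-1}(t_1^{k}+1)$, I would obtain the compact form
\[
s=\frac{t_1\bigl[(1-t_1^{k})(1-2x)+2x^2(t_1+1)(1-t_1^{k-1})\bigr]}{(1-t_1)\bigl[(t_1^{k+1}+1)(1-2x)+2x^2(t_1+1)(t_1^{k}+1)\bigr]}.
\]

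Finally I would substitute $A_k(x,0)=S_k(x,k,k)=s$ into \eqref{eq;gz}, giving $S_k(x)=\dfrac{x(6s-3k-2x+2)}{2x^{2}+5x-1}$, and simplify. Here the kernel identity $x(t_1+t_1^{-1})=1-3x-2x^2$ (equivalently $2x^2+x(t_1+t_1^{-1})=1-3x$), together with the bookkeeping relations $1-5x-2x^2=-(2x^2+5x-1)$ and $1-t_1=-(t_1-1)$, is used repeatedly to trade the coefficients quadratic in $x$ for expressions in $t_1$; after this reduction the denominator becomes $(1-5x-2x^2)(t_1^{k+1}+2t_1^{k}x+2t_1x+1)(t_1-1)$ and the numerator collapses to the two-term form in the statement. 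I expect this last algebraic reduction to be the main obstacle: it is a somewhat lengthy computation in which one must check that the square root hidden in $t_1$ cancels and that the bulky rational function genuinely condenses to the displayed answer, so it is best done with a computer algebra system, with the small values of $k$ and the initial terms in Table \ref{table} serving as a sanity check.
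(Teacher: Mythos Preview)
Your proposal is correct and follows essentially the same route as the paper: eliminate $B_k(x,t)$ from \eqref{eq;20a}--\eqref{eq;25a}, identify the kernel $K(t)=xt^{2}+(2x^{2}+3x-1)t+x$, and use its root $t_1$ to determine the unknowns in \eqref{eq;gz}. The one noteworthy difference is how you obtain the second relation: the paper substitutes \emph{both} roots $t_1$ and $1/t_1$ into \eqref{eq;77a} and solves the resulting $2\times2$ system (finding $A_k(x,0)=S_k(x,k,k)$ a posteriori), whereas you get $A_k(x,0)=S_k(x,k,k)$ directly from the alphabet involution $j\mapsto k+1-j$ and then need only the single substitution $t=t_1$. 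Your variant is slightly cleaner, since it sidesteps the issue of making sense of $A_k(x,1/t_1)$ as a formal power series (recall $1/t_1$ has a pole at $x=0$); the paper's version, on the other hand, does not require spotting the symmetry and generalises mechanically to the other two graph families, where the analogous reflection does not by itself supply enough equations.
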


\begin{proof}
First, we solve (\ref{eq;25a}) for $B_k(x,t)$ and substitute it into (\ref{eq;20a}). This gives us the equation
\begin{align}
\frac{K(t)}{t}A_{k}(x,t)&=\frac{(2tx+1)x}{t}A_{k}(x,0)+(t+2x)xt^{k-1}S_{k}(x,k,k)\nonumber\\&+\frac{2x^{2}(1-t^{k-1})(t+1)+(x-2x^{2})(1-t^{k})}{t-1},\label{eq;77a}
\end{align} where $K(t)=xt^{2}+(2x^{2}+3x-1)t+x$. The two roots of the kernel equation $K(t)=0$ are given by $t_1$ and $1/t_1$.
By substituting $t_1$ and $1/t_1$ into \eqref{eq;77a}, we obtain a linear system of two equations in the variables $A_k(x,0)$ and $S_k(x,k,k)$. Solving this system we obtain
\begin{align}
A_k(x,0) &= S_k(x,k,k)\\ 
&=\frac{4\left(t_1^{2k}-t_1^{k+1}(1+t_1)+t_1^3\right)}{T}x^{2}\nonumber\\ &+\frac{2\left(2t_1^{2k+1}-t_1^{k}(1+t_1)(1+t_1^2)+2t_1^2\right)}{T}x
+\frac{t_1^{2k+2}-t_1^{k+1}(1+t_1)+t_1}{T},  \nonumber
\end{align} where $T = (1-t_1)\left(1-t_1^{2k+2}-4(t_1^{2k+1}-t_1)x-4(t_1^{2k}-t_1^2)x^{2}\right)$.
Substituting these in \eqref{eq;gz}, the assertion follows.
\end{proof}

\begin{example}
    The generating function of the number of staircase $(KG_{2,n}, 3)$-words is given by $$\frac{x(3x+7)}{1-4x+3x^2},$$
that is, the number of staircase $(KG_{2,n}, 3)$-words is given by
$$\frac{5\sqrt{7}+7}{14}(2+\sqrt{7})^n-\frac{5\sqrt{7}-7}{14}(2-\sqrt{7})^n.$$
Also, the generating function of the number of staircase $(KG_{2,n}, 4)$-words is given by $$\frac{2x(5-12x-3x^2)}{1-7x+9x^2+6x^3}$$
and the generating function of the number of staircase $(KG_{2,n}, 5)$-words is given by
$$\frac{x(13-30x-42x^2-6x^3)}{1-7x+6x^2+18x^3-6x^4}.$$
\end{example}

\subsection{The grid graph}

Let $S_k(x)=\sum_{n\geq1}s_k(P_2\times P_n)x^n$ be the generating function of the number of staircase $(P_2\times P_n, k)$-words and, for $i,j\in[k]$, we define $S_k(x,i,j)=\sum_{n\geq1}s_k(P_2\times P_n,i,j)x^n$ to be the generating function of the number of staircase $(P_2\times P_n, k)$-words whose first column is $(i,j)^T$. We set $S_k(x,i,j)=0$ if either $i\notin [k]$ or $j\notin [k]$.

\begin{lemma}\label{lem;80}
\begin{enumerate}
\item  We have
\begin{align}
S_k(x)=\sum_{i=1}^kS_k(x,i,i)+2\sum_{i=1}^{k-1}S_k(x,i+1,i).\label{eq1}
\end{align}
\item The generating function $S_k(x,i,i)$ satisfies
\begin{align}
S_k(x,i,i)&=x+xS_k(x,i-1,i-1)+2xS_k(x,i,i-1)\nonumber\\
&+xS_k(x,i,i)+2xS_k(x,i+1,i)+xS_k(x,i+1,i+1).\label{eq;1}
\end{align}
\item The generating function $S_k(x,i+1,i)$ satisfies
\begin{align}
S_k(x,i+1,i)&=x+xS_k(x,i,i-1)+xS_k(x,i,i)\nonumber\\
&+2xS_k(x,i+1,i)+xS_k(x,i+1,i+1)+xS_k(x,i+2,i+1).\label{eq;5}
\end{align}
\end{enumerate}
\end{lemma}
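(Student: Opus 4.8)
The plan is to carry out the same column-by-column decomposition used to prove Lemma~\ref{lem;1a}, now with the adjacencies of the grid graph. The only structural ingredient is the top--bottom symmetry of the $2\times n$ grid: reflecting across the horizontal axis is an automorphism of $P_2\times P_n$ carrying a staircase word with first column $(i,j)^T$ to one with first column $(j,i)^T$; hence $s_k(P_2\times P_n,i,j)=s_k(P_2\times P_n,j,i)$, that is, $S_k(x,i,j)=S_k(x,j,i)$ for all $i,j\in[k]$.

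For part~(1), since $(1,1)$ and $(2,1)$ are adjacent, the first column $(a,b)^T$ of any staircase $(P_2\times P_n,k)$-word satisfies $|a-b|\le1$, so it is one of the columns $(i,i)^T$ with $1\le i\le k$, or $(i+1,i)^T$ or $(i,i+1)^T$ with $1\le i\le k-1$. Partitioning the staircase words according to their first column, summing the associated generating functions, and merging the $(i+1,i)^T$ and $(i,i+1)^T$ families via the symmetry above yields \eqref{eq1}.

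For parts~(2) and~(3), I would split a staircase word $w$ on $P_2\times P_n$ according to whether $n=1$, which contributes the single term $x$, or $n\ge2$; in the latter case deleting the first column of $w$ leaves a staircase $(P_2\times P_{n-1},k)$-word whose first column is the second column of $w$, and this contributes $x$ times the generating function attached to that column. It therefore remains to enumerate the columns $(a,b)^T$ that may legally follow a prescribed first column. Since the only edges leaving column~$1$ to the right are $(1,1)\!-\!(1,2)$ and $(2,1)\!-\!(2,2)$, and column~$2$ must satisfy the in-column constraint, these are exactly the pairs with $|a-w(1,1)|\le1$, $|b-w(2,1)|\le1$ and $|a-b|\le1$. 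When the first column is $(i,i)^T$ this gives the seven columns $(i-1,i-1)^T,(i,i)^T,(i+1,i+1)^T,(i,i-1)^T,(i-1,i)^T,(i+1,i)^T,(i,i+1)^T$; when it is $(i+1,i)^T$ it gives the six columns $(i,i-1)^T,(i,i)^T,(i,i+1)^T,(i+1,i)^T,(i+1,i+1)^T,(i+2,i+1)^T$. Passing to generating functions and collapsing the mirror pairs via $S_k(x,i,i-1)=S_k(x,i-1,i)$ and $S_k(x,i,i+1)=S_k(x,i+1,i)$ then produces \eqref{eq;1} and \eqref{eq;5}.

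There is no real obstacle; the work is the same bookkeeping as in Lemma~\ref{lem;1a}. The only point that needs attention is the case analysis for the admissible successor column, and in particular noticing where the grid departs from the king's graph: the grid lacks the diagonal edges $(1,1)\!-\!(2,2)$ and $(2,1)\!-\!(1,2)$, so after a column $(i+1,i)^T$ the columns $(i,i-1)^T$ and $(i+2,i+1)^T$ become admissible, which accounts for the two extra terms in \eqref{eq;5} compared with the king's-graph recursion \eqref{eq;5a}; by contrast \eqref{eq;1} coincides with \eqref{eq;1a}, since the diagonal constraints turn out to be redundant when the first column is $(i,i)^T$.
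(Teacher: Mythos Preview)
Your proposal is correct and follows essentially the same column-by-column decomposition as the paper's proof: partition staircase words by their first column, list the admissible successor columns, and translate into generating-function recurrences, using the top--bottom symmetry $S_k(x,i,j)=S_k(x,j,i)$ to collapse mirror pairs. The paper is terser (it simply says ``This is clear'' for part~(1) and lists the admissible second columns for parts~(2) and~(3)), while you spell out the symmetry argument and the comparison with the king's-graph case, but the underlying argument is the same.
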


\begin{proof}
\begin{enumerate}
\item This is clear.
\item Let $w$ be a staircase $P_2\times P_n$-word whose first column is $(i,i)^T$. Either $n=1$, or $n\geq2$. In the latter case, the second column of $w$ must be one of
$$(i-1,i-1)^T,(i,i-1)^T,(i-1,i)^T,(i,i)^T,(i+1,i)^T,(i,i+1)^T,(i+1,i+1)^T.$$ Writing this in terms of generating functions, we obtain (\ref{eq;1}).
\item Similar to the previous case, only, this time, the second column of $w$ must be one of
\[(i,i-1)^T,(i,i)^T,(i+1,i)^T,(i,i+1)^T,(i+1,i+1)^T,(i+2,i+1)^T.\qedhere\]
\end{enumerate}
\end{proof}

Equation (\ref{eq1}) motivates the definition of two additional generating functions (in the variables $x$ and $t$):
$$A_k(x,t)=\sum_{i=1}^kS_k(x,i,i)t^{i-1}\mbox{ and }B_k(x,t)=\sum_{i=1}^{k-1}S_k(x,i+1,i)t^{i-1}.$$

\begin{lemma}\label{lem;3c}
We have
\begin{align}
A_k(x,1)&=\frac{x(A_{k}(x,0)+4B_{k}(x,0)x
+4S_{k}(x,k,k-1)x+S_{k}(x,k,k)-k+4x)}{4x^{2}-7x+1}\label{eq;90a}\\
B_k(x,1)&=\frac{x((x-1)A_{k}(x,0)+(3x-1)B_{k}(x,0)+(3x-1)S_{k}(x,k,k-1))}{4x^{2}-7x+1}\nonumber\\&+\frac{x((x-1)S_{k}(x,k,k)+(3-k)x+k-1)}{4x^{2}-7x+1},\label{eq;90b}
\end{align}
Furthermore,
\begin{align}
    S_k(x) &= \frac{x((2x-3)A_{k}(x,0)+(2x-2)B_{k}(x,0)x+(2x-2)S_{k}(x,k,k-1))}{4x^{2}-7x+1}\nonumber\\&+\frac{x((2x-3)S_{k}(x,k,k)-k(2x-3)+2x-2)}{4x^{2}-7x+1}.\label{eq;lak}
\end{align}
\end{lemma}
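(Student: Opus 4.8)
The plan is to mimic, almost verbatim, the proof of Lemma~\ref{lem;3a}. First I would convert the two recurrences of Lemma~\ref{lem;80} into functional equations: multiply \eqref{eq;1} by $t^{i-1}$ and sum over $i=1,\dots,k$, and multiply \eqref{eq;5} by $t^{i-1}$ and sum over $i=1,\dots,k-1$. Since \eqref{eq;1} is identical to \eqref{eq;1a}, the first equation is again \eqref{eq;20a}. The second is the only genuinely new ingredient; keeping track of the index shifts $i\mapsto i\pm1$ coming from the two extra summands $xS_k(x,i,i-1)$ and $xS_k(x,i+2,i+1)$ of \eqref{eq;5} (the ones not present in \eqref{eq;5a}), it reads
\begin{align*}
B_k(x,t)&=\frac{x(1-t^{k-1})}{1-t}+xt\left(B_k(x,t)-S_k(x,k,k-1)t^{k-2}\right)+x\left(A_k(x,t)-S_k(x,k,k)t^{k-1}\right)\\
&\quad+2xB_k(x,t)+\frac{x}{t}\left(A_k(x,t)-A_k(x,0)\right)+\frac{x}{t}\left(B_k(x,t)-B_k(x,0)\right).
\end{align*}
Here the $i=1$ terms of the down-shifted sums reassemble into $A_k(x,0)=S_k(x,1,1)$ and $B_k(x,0)=S_k(x,2,1)$, the extremal terms of the up-shifted sums reassemble into $S_k(x,k,k)$ and $S_k(x,k,k-1)$, and every index falling outside $[k]\times[k]$ (such as $S_k(x,0,0)$ or $S_k(x,k+1,k)$) contributes nothing by our convention. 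These four quantities are exactly the unknowns appearing in the claimed formulas.

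Next I would take $t\to1$ in \eqref{eq;20a} and in the equation above. Each geometric sum $x(1-t^{m})/(1-t)$ tends to $mx$ and every other term is regular at $t=1$, so one obtains the $2\times2$ linear system
\begin{align*}
(1-3x)A_k(x,1)-4xB_k(x,1)&=x\left(k-S_k(x,k,k)-A_k(x,0)\right),\\
-2xA_k(x,1)+(1-4x)B_k(x,1)&=x\left(k-1-S_k(x,k,k-1)-S_k(x,k,k)-A_k(x,0)-B_k(x,0)\right).
\end{align*}
Its determinant equals $(1-3x)(1-4x)-8x^{2}=4x^{2}-7x+1$, which is a unit in $\mathbb{Q}[[x]]$, so Cramer's rule (or elimination) solves it uniquely; collecting the result over the common denominator $4x^{2}-7x+1$ yields the stated formulas \eqref{eq;90a} and \eqref{eq;90b} for $A_k(x,1)$ and $B_k(x,1)$. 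Finally, \eqref{eq1} is precisely the identity $S_k(x)=A_k(x,1)+2B_k(x,1)$, so substituting the two expressions just obtained and simplifying gives \eqref{eq;lak}.

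The only step that demands care is the derivation of the two functional equations, specifically the bookkeeping of which boundary monomials survive the index shifts in each summand of \eqref{eq;1} and \eqref{eq;5}; one must arrange that the right-hand sides of the limiting system involve the four boundary unknowns $A_k(x,0),B_k(x,0),S_k(x,k,k),S_k(x,k,k-1)$ and no others. Once the system is written down correctly, the rest is a routine $2\times2$ solve followed by algebraic simplification. A useful sanity check is $k=1$: there $A_k(x,0)=S_k(x,k,k)=x/(1-x)$ and $B_k(x,0)=S_k(x,k,k-1)=0$, and all three formulas must collapse to $S_k(x)=x/(1-x)$.
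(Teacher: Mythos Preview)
Your proposal is correct and matches the paper's own proof essentially step for step: the paper likewise derives the two functional equations \eqref{eq;20} and \eqref{eq;25} (your displayed equation for $B_k(x,t)$ is exactly \eqref{eq;25}), sets $t\to1$ to obtain a $2\times2$ linear system in $A_k(x,1)$ and $B_k(x,1)$, solves it, and then uses $S_k(x)=A_k(x,1)+2B_k(x,1)$. Your explicit display of the limiting system and its determinant $4x^2-7x+1$, as well as the $k=1$ sanity check, are helpful additions but do not depart from the paper's method.
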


\begin{proof}
Multiplying \eqref{eq;1} by $t^{i-1}$ and summing over $i=1,2,\ldots,k$, we obtain
\begin{align}
A_k(x,t)&=\frac{x(1-t^k)}{1-t}+xt(A_k(x,t)-S_k(x,k,k)t^{k-1})+2xtB_k(x,t)\nonumber\\
&+xA_k(x,t)+2xB_k(x,t)+\frac{x}{t}(A_k(x,t)-A_k(x,0))\label{eq;20}
\end{align} Similarly, multiplying \eqref{eq;5} by $t^{i-1}$ and summing over $i=1,2,\ldots,k-1$, we obtain
\begin{align}
B_k(x,t)&=\frac{x(1-t^{k-1})}{1-t}+xt(B_k(x,t)-S_k(x,k,k-1)t^{k-2})+x(A_k(x,t)-S_k(x,k,k)t^{k-1})\nonumber\\
&+2xB_k(x,t)+\frac{x}{t}(A_k(x,t)+B_k(x,t)-A_k(x,0)-B_k(x,0)).\label{eq;25}
\end{align}

Taking $\lim_{t\to 1}$ in \eqref{eq;20} and \eqref{eq;25}, we obtain a linear system of two equations in the variables $A_k(x,1)$ and $,B_k(x,1)$. Solving this system gives \eqref{eq;90a} and \eqref{eq;90b}.

To obtain \eqref{eq;lak}, notice that equation \eqref{eq1} is equivalent to $S_k(x) = A_k(x,1) + 2B_k(x,1)$ and the assertion follows from the previous part.
\end{proof}

\begin{theorem}\label{mth3}
We have
\begin{align*}
S_k(x)&=\frac{a_{1}(x,t_1,t_2)t_{1}^{k}t_{2}^{k}+a_{2}(x,t_1,t_2)t_{1}^{k}-a_{2}(x,t_2,t_1)t_{2}^{k}+a_{3}(x,t_1,t_2)}{(t_1-1)(t_2-1)(4x^2 - 7x + 1)(b_1(x,t_1,t_2)t_1^kt_2^k + b_2(x,t_1,t_2)t_1^k -b_2(x,t_2,t_1)t_2^k + b_3(x,t_1,t_2))},
\end{align*} where

\begin{align}
a_1(x,t_1,t_2)&=t_1t_2(t_2-t_1)(2k(t_2^2 - 1)(t_1^2-1)x^3\nonumber\\& + (-3kt_1^2t_2^2 + 3kt_1^2 - 2kt_1t_2 + 3kt_2^2 - 2t_1^2t_2 - 2t_1t_2^2 + 2kt_1 + 2kt_2 - 2t_1t_2 - 5k - 2)x^2\nonumber\\& + (3kt_1t_2 + 2t_1^2t_2 + 2t_1t_2^2 - 3kt_1 - 3kt_2 + 6t_1t_2 + 3k + 4)x - 2t_2t_1),\nonumber\\
a_2(x,t_1,t_2)&=t_1(1-t_1t_2)(2k(t_1-1)(t_2 + 1)(t_2-1)(t_1 + 1)x^3\nonumber\\& + (-3kt_1^2t_2^2 - 2kt_1t_2^2 + 3kt_1^2 + 2kt_1t_2 + 5kt_2^2 + 2t_1^2t_2 - 2kt_2 + 2t_1t_2 + 2t_2^2 - 3k + 2t_1)x^2\nonumber\\& + (3kt_1t_2^2 - 3kt_1t_2 - 3kt_2^2 - 2t_1^2t_2 + 3kt_2 - 6t_1t_2 - 4t_2^2 - 2t_1)x + 2t_2t_1),\nonumber\\
a_3(x,t_1,t_2)&=(t_1-t_2)(2k(t_1^2-1)(t_2^2-1)x^3\nonumber\\& + (-5kt_1^2t_2^2 + 2kt_1^2t_2 + 2kt_1t_2^2 - 2t_1^2t_2^2 + 3kt_1^2 - 2kt_1t_2 + 3kt_2^2 - 2t_1t_2 - 3k - 2t_1 - 2t_2)x^2\nonumber\\& + (3kt_1^2t_2^2 - 3kt_1^2t_2 - 3kt_1t_2^2 + 4t_1^2t_2^2 + 3kt_1t_2 + 6t_1t_2 + 2t_1 + 2t_2)x - 2t_1t_2),\nonumber\\
b_1(x,t_1,t_2)&= t_1t_2(t_1-t_2)((t_1+1)(t_2+1)x-1),\nonumber\\
b_2(x,t_1,t_2)&= t_1(t_1t_2 - 1)((t_1+1)(t_2+1)x- t_2),\nonumber\\
b_3(x,t_1,t_2)&= (t_2-t_1)((t_1+1)(t_2+1)x - t_1t_2),\nonumber
\end{align} and
$$t_{1,2}=\frac{2-x\pm\sqrt{x(9x+8)}+\sqrt{\left(2-x\pm\sqrt{x(9x+8)}\right)^2-16x^2}}{4x}.$$
\end{theorem}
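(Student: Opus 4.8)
The plan is to follow the same kernel-method scheme already used for the king's graph in Theorem~\ref{mth1}, but now working with the functional equations \eqref{eq;20} and \eqref{eq;25} for the grid graph. First I would eliminate $B_k(x,t)$: solve \eqref{eq;25} for $B_k(x,t)$ as a rational function of $A_k(x,t)$, $S_k(x,k,k)$, $S_k(x,k,k-1)$, $A_k(x,0)$, $B_k(x,0)$ (and $t,x$), and substitute into \eqref{eq;20}. Unlike the king's case, the coefficient of $A_k(x,t)$ that emerges will no longer be a quadratic in $t$; because the $B$-equation itself carries a factor $xt$ multiplying $B_k(x,t)$, after clearing denominators the kernel polynomial $K(t)$ multiplying $A_k(x,t)$ becomes a quartic in $t$. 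This is the source of the two kernel roots $t_1,t_2$ appearing in the statement, in contrast with the single root $t_1$ of Theorem~\ref{mth1}.

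Next I would identify which roots of the quartic kernel are the "small" ones, i.e.\ the two power series in $x$ vanishing at $x=0$ (the other two being of the form $1/t_i$ up to the structure of the palindromic-type kernel), and check that substituting each of $t_1$ and $t_2$ into the cleared version of \eqref{eq;20} kills the left-hand side, yielding two linear equations. Together with the fact that the problem has, a priori, four unknown boundary functions $A_k(x,0)$, $B_k(x,0)$, $S_k(x,k,k)$, $S_k(x,k,k-1)$, I expect one further relation to come for free from the structure of the equations (e.g.\ a symmetry $S_k(x,k,k-1)=B_k(x,0)$ type identity, or a coincidence analogous to $A_k(x,0)=S_k(x,k,k)$ in the king's case), leaving a determined linear system. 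Solving that $2\times 2$ (or $3\times3$ after using the free relation) system over the field $\R(x,t_1,t_2)$ gives closed forms for the boundary functions as rational expressions in $x,t_1,t_2$; these are exactly the ingredients packaged into $a_1,a_2,a_3,b_1,b_2,b_3$.

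Finally I would substitute the resulting boundary functions into \eqref{eq;lak}, which expresses $S_k(x)$ linearly in $A_k(x,0)$, $B_k(x,0)$, $S_k(x,k,k)$, $S_k(x,k,k-1)$. After clearing denominators and collecting terms according to the monomials $t_1^kt_2^k$, $t_1^k$, $t_2^k$, $1$, one reads off the claimed formula, with the antisymmetry under $t_1\leftrightarrow t_2$ (visible in the $-a_2(x,t_2,t_1)t_2^k$ and $-b_2(x,t_2,t_1)t_2^k$ terms) reflecting that $t_1,t_2$ enter only through symmetric functions of the kernel. The explicit radicals for $t_{1,2}$ come from solving the quartic $K(t)=0$: since it should be palindromic (or anti-palindromic) in $t$, the substitution $u=t+1/t$ reduces it to a quadratic in $u$, whose two roots are $u_\pm=\frac{2-x\pm\sqrt{x(9x+8)}}{2x}$, and then $t=\frac{u_\pm+\sqrt{u_\pm^2-4}}{2}$ gives the stated expression. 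The main obstacle is bookkeeping: unlike Theorem~\ref{mth1}, there are two kernel roots and up to four boundary unknowns, so the linear algebra is heavier and the resulting rational functions are large; verifying the $t_1\leftrightarrow t_2$ antisymmetry and the vanishing of spurious poles at $t_i=1$ (hence the $(t_1-1)(t_2-1)$ denominators) is where care is needed, and sanity checks against the $k=3,4,5$ specializations in the following example would be the practical way to catch sign errors.
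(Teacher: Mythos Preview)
Your overall scheme matches the paper's exactly --- eliminate $B_k(x,t)$ via \eqref{eq;25}, obtain a single kernel equation for $A_k(x,t)$ with a palindromic quartic kernel $K(t)=x^2t^4+x(x-2)t^3+(1-3x)t^2+x(x-2)t+x^2$, solve for the boundary data, and plug into \eqref{eq;lak} --- and your derivation of the radicals for $t_{1,2}$ via $u=t+1/t$ is correct.

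There is, however, one genuine miscount in your plan. You propose to substitute only the two ``small'' kernel roots, obtaining two linear equations, and then hope that \emph{one} extra symmetry relation will close the system. That is not enough: there are four boundary unknowns $A_k(x,0)$, $B_k(x,0)$, $S_k(x,k,k)$, $S_k(x,k,k-1)$, so two kernel equations plus one relation still leaves the system underdetermined. The paper instead substitutes \emph{all four} roots $t_1,\,1/t_1,\,t_2,\,1/t_2$ into the cleared equation, yielding a $4\times4$ linear system directly; the identities $A_k(x,0)=S_k(x,k,k)$ and $B_k(x,0)=S_k(x,k,k-1)$ then emerge as consequences of the solution rather than as assumed inputs. (The paper already did this in the king's case, using both $t_1$ and $1/t_1$.) The point you are missing is that $A_k(x,t)=\sum_{i=1}^k S_k(x,i,i)t^{i-1}$ is a \emph{polynomial} in $t$, not an infinite power series, so there is no convergence restriction and the ``large'' roots are just as admissible as the small ones. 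Incidentally, the $t_{1,2}$ displayed in the statement are the large roots (they blow up as $x\to0$), not the small ones. Once you substitute all four roots the rest of your outline goes through verbatim.
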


\begin{proof}
First, we solve \eqref{eq;25} for $B_k(x,t)$ and substitute it into \eqref{eq;20}. This gives us the equation
\begin{align}
\frac{K(t)}{t}A_k(x,t)&=-\frac{x(xt^2+t-x)}{t}A_k(x,0)-2x^2(t+1)B_k(x,0)
+x(xt^2-t-x)t^kS_k(x,k,k)
\nonumber\\&-2x^2(t+1)t^kS_k(x,k,k-1)+\frac{x(xt^{k+2}-t^{k+1}-xt^k+xt^2+t-x)}{1-t}.\label{eq;77}
\end{align} where $K(t)=x^2t^4+x(x-2)t^3+(1-3x)t^2+x(x-2)t+x^2$. The four roots of the kernel equation $K(t)=0$ are given by $t_1,1/t_1,t_2,$ and $1/t_2$. By substituting these four  roots into \eqref{eq;77}, we obtain a linear system of four equations in the variables $A_k(x,0), B_k(x,0), S_k(x,k,k-1)$, and $S_k(x,k,k)$. Solving this system we obtain
\begin{align}
&A_k(x,0) = S_k(x,k,k) =\nonumber\\ &\frac{f_1(x,t_1, t_2)t_1^kt_2^k+f_2(x,t_1, t_2)t_1^k-f_2(x,t_2, t_1)t_2^k+f_3(x,t_1, t_2)}{g_1(x,t_1,t_2)t_1^kt_2^k+g_2(x,t_1,t_2)t_1^k-g_2(x,t_2,t_1)t_2^k+g_3(x,t_1,t_2)},\nonumber\\
&B_k(x,0) = S_k(x,k-1,k) =\nonumber\\
&\frac{q_1(x,t_1, t_2)t_1^kt_2^k +q_2(x,t_1, t_2)t_1^k -q_2(x,t_2, t_1)t_2^k +q_3(x,t_1, t_2)}{2x\left(g_1(x,t_1,t_2)t_1^kt_2^k+g_2(x,t_1,t_2)t_1^k-g_2(x,t_2,t_1)t_2^k+g_3(x,t_1,t_2)\right)},
\end{align} where
\begin{align}
f_1(x,t_1, t_2) &= t_1t_2( t_1-t_2)(t_1t_2 + 1),\nonumber\\
f_2(x,t_1, t_2) &= t_1t_2(t_1 + t_2)(1-t_1t_2),\nonumber\\
f_3(x,t_1, t_2) &=t_1t_2(t_1t_2 + 1)(t_2-t_1),\nonumber\\
g_1(x,t_1,t_2) &= t_2t_1(t_1-1)(t_2-1)(t_1-t_2)(t_1t_2x + t_1x + t_2x + x - 1),\nonumber\\
g_2(x,t_1,t_2) &= t_1(t_1-1)(t_2-1)(t_1t_2 - 1)(t_1t_2x + t_1x + t_2x - t_2 + x),\nonumber\\g_3(x,t_1,t_2)&=(t_1-1)(t_2-1)(t_2-t_1)(t_1t_2x - t_1t_2 + t_1x + t_2x + x),\nonumber\\
q_1(x,t_1,t_2)&=t_1t_2(t_2 - t_1)(t_2^2x - t_2 - x)(t_1^2x - t_1 - x), \nonumber\\
q_2(x,t_1,t_2)&=t_1(1-t_1t_2)(t_2^2x + t_2 - x)(t_1^2x - t_1 - x), \nonumber\\
q_3(x,t_1,t_2)&=(t_1 - t_2)(t_2^2x + t_2 - x)(t_1^2x + t_1 - x). \nonumber
\end{align} Substituting these in \eqref{eq;lak}, the assertion follows.
\end{proof}

\begin{example}
    The generating function of the number of staircase $(P_2\times P_n, 3)$-words is given by $$\frac{x(7-x^2)}{1-5x-x^2+x^3}.$$
\end{example}

\subsection{The rectangle-triangular graph}
Let $S_k(x)=\sum_{n\geq1}s_k(RT_{2,n})x^n$ be the generating function of the number of staircase $(RT_{2,n}, k)$-words and, for $i,j\in[k]$, we define   $S_k(x,i,j)=\sum_{n\geq1}s_k(RT_{2,n},i,j)x^n$ to be the generating function of the number of staircase $(RT_{2,n}, k)$-words whose first column is $(i,j)^T$. We set $S_k(x,i,j)=0$ if either $i\notin [k]$ or $j\notin [k]$.

\begin{lemma}\label{lem;1b}
\begin{enumerate}
\item  We have
\begin{align}
S_k(x)=\sum_{i=1}^kS_k(x,i,i)+\sum_{i=1}^{k-1}S_k(x,i+1,i)+\sum_{i=1}^{k-1}S_k(x,i,i+1).\label{eq1b}
\end{align}
    \item The generating function $S_k(x,i,i)$ satisfies
\begin{align}
S_{k}(x,i,i)&=x+xS_{k}(x,i-1,i-1)+xS_{k}(x,i,i-1)+xS_{k}(x,i-1,i)+xS_{k}(x,i,i)\nonumber\\ &+xS_{k}(x,i+1,i)+xS_{k}(x,i,i+1)+xS_{k}(x,i+1,i+1).\label{eq;1b}
\end{align}
\item The generating function $S_k(x,i+1,i)$ satisfies
\begin{align}
S_k(x,i+1,i)&=x+xS_{k}(x,i,i)+xS_{k}(x,i+1,i)+xS_{k}(x,i,i+1)\nonumber\\&+xS_{k}(x,i+1,i+1)+xS_{k}(x,i+2,i+1).\label{eq;5b}
\end{align}
\item The generating function $S_k(x,i,i+1)$ satisfies
\begin{align}
S_{k}(x,i,i+1)&=x+xS_{k}(x,i-1,i)+xS_{k}(x,i,i)+xS_{k}(x,i+1,i)\nonumber\\&+xS_{k}(x,i,i+1)+xS_{k}(x,i+1,i+1).\label{eq;5c}
\end{align}
\end{enumerate}
\end{lemma}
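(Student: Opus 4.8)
The proof runs parallel to those of Lemmas~\ref{lem;1a} and~\ref{lem;80}; the only new feature is the asymmetric ``down–right'' diagonal edge of $RT_{2,n}$, which joins the top vertex $(1,j)$ of column $j$ to the bottom vertex $(2,j+1)$ of column $j+1$ (conditions (b) and (c) in the definition describe the same family of edges, read in two directions). The plan is as follows.

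For part (1), I would partition the set of staircase $(RT_{2,n},k)$-words according to their first column. Since $(1,1)$ and $(2,1)$ are adjacent via condition (a), the two entries of the first column differ by at most $1$, so the first column has exactly one of the forms $(i,i)^T$ with $1\le i\le k$, $(i+1,i)^T$ with $1\le i\le k-1$, or $(i,i+1)^T$ with $1\le i\le k-1$. Summing the corresponding generating functions gives \eqref{eq1b}. Unlike the king's and grid graphs, $RT_{2,n}$ is not invariant under exchanging its two rows, so $S_k(x,i+1,i)$ and $S_k(x,i,i+1)$ need not coincide, which is why the last two sums in \eqref{eq1b} are kept separate.

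For parts (2)–(4), I would use the usual ``peel off the first column'' decomposition. Fix a staircase word $w$ whose first column is a prescribed pair $(a,b)^T$. If $n=1$, there is exactly one such word, contributing the term $x$. If $n\ge 2$, deleting the first column leaves a staircase $(RT_{2,n-1},k)$-word, and the admissible second columns $(c,d)^T$ of $w$ are precisely those compatible with $(a,b)^T$ under the edges of $RT_{2,n}$ joining columns $1$ and $2$: the vertical edge of column $2$ forces $|c-d|\le 1$; the two horizontal edges force $|a-c|\le 1$ and $|b-d|\le 1$; and the diagonal edge forces $|a-d|\le 1$. Intersecting these constraints for $(a,b)=(i,i)$, for $(a,b)=(i+1,i)$, and for $(a,b)=(i,i+1)$ yields the lists of seven, five, and five admissible second columns occurring in \eqref{eq;1b}, \eqref{eq;5b}, and \eqref{eq;5c}, respectively; recording each occurrence of a second column $(c,d)^T$ as $x\,S_k(x,c,d)$ and summing produces the three functional equations. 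The boundary cases (when $i=1$ or $i=k$, so that a listed second column falls outside $[k]^2$) need no separate argument because of the convention $S_k(x,i,j)=0$ whenever $i\notin[k]$ or $j\notin[k]$.

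The only step requiring genuine care is the enumeration of admissible second columns in parts (3) and (4). Because the diagonal edge is ``oriented'' — it links the top of column $j$ to the bottom of column $j+1$ — it constrains the bottom entry $d$ of the second column but not the top entry $c$, and this asymmetry is exactly what makes \eqref{eq;5b} differ from \eqref{eq;5c} and both differ from the grid-graph analogue \eqref{eq;5}. I would verify each of the three lists by a short case split on the value of $d$, which in cases (3) and (4) is pinned to two possibilities and in case (2) to three.
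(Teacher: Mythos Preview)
Your proposal is correct and follows exactly the same approach as the paper's proof: partition by the first column for part~(1), and for parts~(2)--(4) enumerate the admissible second columns under the four constraints $|c-d|\le 1$, $|a-c|\le 1$, $|b-d|\le 1$, $|a-d|\le 1$ (the last coming from the single diagonal edge $(1,1)\sim(2,2)$). Your write-up is in fact more detailed than the paper's, which simply lists the seven, five, and five second columns without explanation; your observation about the row-asymmetry of $RT_{2,n}$ and the boundary convention are accurate and helpful.
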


\begin{proof}
\begin{enumerate}
\item This is clear.
\item Let $w$ be a staircase $(RT_{2,n}, k)$-word whose first column is $(i,i)^T$. Either $n=1$, or $n\geq2$. In the latter case, the second column of $w$ must be one of
$$(i-1,i-1)^T,(i,i-1)^T,(i-1,i)^T,(i,i)^T,(i+1,i)^T,(i,i+1)^T,(i+1,i+1)^T.$$ Writing this in terms of generating functions, we obtain (\ref{eq;1b}).
\item Similar to the previous case, only, this time, the second column of $w$ must be one of
$$(i,i)^T,(i+1,i)^T,(i,i+1)^T,(i+1,i+1)^T,(i+2,i+1)^T.$$
\item Similar to the previous cases, only, this time, the second column of $w$ must be one of
\[(i-1,i)^T,(i,i)^T,(i+1,i)^T,(i,i+1)^T, (i+1,i+1)^T.\qedhere\]
\end{enumerate}
\end{proof}

Equation (\ref{eq1b}) motivates the definition of three additional generating functions (in the variables $x$ and $t$):
\begin{align}
A_k(x,t)&=\sum_{i=1}^kS_k(x,i,i)t^{i-1},&&B_k(x,t)=\sum_{i=1}^{k-1}S_k(x,i+1,i)t^{i-1},\nonumber\\
C_k(x,t)&=\sum_{i=1}^{k-1}S_k(x,i,i+1)t^{i-1}.\nonumber
\end{align}

\begin{lemma}\label{lem;3b}
We have
\begin{align}
A_k(x,1)&=-\frac{x\left((x+1)A_{k}(x,0)+2xB_{k}(x,0)+2xS_{k}(x,k-1,k)\right)}{x^{2}-6x+1}\nonumber\\&-\frac{x\left((x+1)S_{k}(x,k,k)+(4-k)x-k\right)}{x^{2}-6x+1}
\label{eq;10a}\\
B_k(x,1)&=\frac{x\left((x-1)^{2}A_{k}(x,0)+(2x^{2}-5x+1)B_{k}(x,0)+x(x+1)S_{k}(x,k-1,k)\right)}{x^{3}-7x^{2}+7x-1}\nonumber\\&+\frac{x\left((x-1)^{2}S_{k}(x,k,k)+(3-k)x^{2}+2x(k-2)-k+1\right)}{x^{3}-7x^{2}+7x-1},\label{eq;10b}\\
C_k(x,1)&=\frac{x\left((x-1)^{2}A_{k}(x,0)+x(x+1)B_{k}(x,0)+(2x^{2}-5x+1)S_{k}(x,k-1,k)\right)}{x^{3}-7x^{2}+7x-1}\nonumber\\&+\frac{x\left((x-1)^{2}S_{k}(x,k,k)+(3-k)x^{2}+2x(k-2)-k+1\right)}{x^{3}-7x^{2}+7x-1}.\label{eq;10c}
\end{align}
Furthermore,
\begin{align}
    S_k(x) &= \frac{x\left((x-3)A_{k}(x,0)+(x-1)B_{k}(x,0)+(x-1)S_{k}(x,k-1,k)\right)}{x^{2}-6x+1}\nonumber\\&+\frac{x\left((x-3)S_{k}(x,k,k)-kx+2x+3k-2\right)}{x^{2}-6x+1}.\label{eq;hak}
\end{align}
\end{lemma}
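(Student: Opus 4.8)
The plan is to run the argument of Lemmas \ref{lem;3a} and \ref{lem;3c} essentially verbatim; the only new feature is that the rectangle-triangular graph forces us to track three families of generating functions, $A_k$, $B_k$, $C_k$, rather than two, so the linear system solved at the end is $3\times 3$ instead of $2\times 2$.

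First I would turn the three column recursions of Lemma \ref{lem;1b} into functional equations in $t$: multiply \eqref{eq;1b} by $t^{i-1}$ and sum over $i=1,\dots,k$, and do the same with \eqref{eq;5b} and \eqref{eq;5c}, summing over $i=1,\dots,k-1$. The delicate point is the bookkeeping of index shifts and the boundary contributions they create. A downward shift $i\mapsto i+1$ turns $\sum S_k(x,i+1,i+1)t^{i-1}$ into $\tfrac{x}{t}\bigl(A_k(x,t)-A_k(x,0)\bigr)$, using $S_k(x,1,1)=A_k(x,0)$ and $S_k(x,k+1,k+1)=0$, and likewise turns $\sum S_k(x,i+2,i+1)t^{i-1}$ into $\tfrac{x}{t}\bigl(B_k(x,t)-B_k(x,0)\bigr)$ via $B_k(x,0)=S_k(x,2,1)$; an upward shift, as in $\sum_{i\le k-1}S_k(x,i,i)t^{i-1}$ or $\sum S_k(x,i-1,i)t^{i-1}$, produces the corrections $-S_k(x,k,k)t^{k-1}$ and (respectively) a correction involving $S_k(x,k-1,k)$; the constant terms collapse to $\tfrac{x(1-t^k)}{1-t}$ or $\tfrac{x(1-t^{k-1})}{1-t}$; and terms such as $S_k(x,i+1,i)t^{i-1}$ over $i\le k-1$ give $B_k(x,t)$ cleanly since $S_k(x,k+1,k)=0$. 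In particular, the only boundary unknowns that survive across all three equations are $A_k(x,0)$, $B_k(x,0)$, $S_k(x,k-1,k)$ and $S_k(x,k,k)$.

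Next I would let $t\to1$, using $\lim_{t\to1}\tfrac{1-t^m}{1-t}=m$. This yields the linear system
\[
\begin{pmatrix}1-3x & -2x & -2x\\ -2x & 1-2x & -x\\ -2x & -x & 1-2x\end{pmatrix}
\begin{pmatrix}A_k(x,1)\\ B_k(x,1)\\ C_k(x,1)\end{pmatrix}=v,
\]
where each coordinate of $v$ is $-x$ times an affine combination of the four boundary unknowns above and of $k$, with nothing further unknown. A short computation gives the determinant $(1-x)(x^2-6x+1)=-(x^3-7x^2+7x-1)$, and Cramer's rule then produces \eqref{eq;10a}--\eqref{eq;10c}: the factor $1-x$ cancels in the numerator of $A_k(x,1)$, leaving the denominator $x^2-6x+1$, whereas for $B_k(x,1)$ and $C_k(x,1)$ it does not, leaving $x^3-7x^2+7x-1$, which matches exactly the shape of the claimed formulas. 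Finally, \eqref{eq1b} is literally the identity $S_k(x)=A_k(x,1)+B_k(x,1)+C_k(x,1)$, so adding the three expressions and simplifying gives \eqref{eq;hak}.

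The obstacle here is clerical rather than conceptual: keeping every index shift and every boundary term straight when forming the functional equations, and then carrying out the bulky but mechanical elimination in the $3\times3$ system — naturally delegated to a computer algebra system, as the explicit rational functions in the statement suggest. No kernel method is needed for this lemma; it enters only in the subsequent theorem, where the four boundary unknowns are themselves determined using the roots of the kernel polynomial.
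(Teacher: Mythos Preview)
Your proposal is correct and follows essentially the same route as the paper: form the three functional equations in $t$ from Lemma~\ref{lem;1b}, specialize to $t=1$, and solve the resulting $3\times3$ linear system. Your added detail — the explicit coefficient matrix and the factorization $\det=(1-x)(x^{2}-6x+1)=-(x^{3}-7x^{2}+7x-1)$, explaining why $A_k(x,1)$ and $S_k(x)$ carry the quadratic denominator while $B_k(x,1)$ and $C_k(x,1)$ carry the cubic — is a nice touch that the paper leaves implicit.
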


\begin{proof}
Multiplying (\ref{eq;1b}) by $t^{i-1}$ and summing over $i=1,2,\ldots,k$, we obtain
\begin{align}
A_{k}(x,t)&=\frac{x(1-t^{k})}{1-t}+xt\left(A_{k}(x,t)-S_{k}(x,k,k)t^{k-1}\right)+xtB_{k}(x,t)+xtC_{k}(x,t)\nonumber\\&+xA_{k}(x,t)+xB_{k}(x,t)+xC_{k}(x,t)+\frac{x}{t}\left(A_{k}(x,t)-A_{k}(x,0)\right)\label{eq;20b}.
\end{align} Similarly, multiplying (\ref{eq;5b}) by $t^{i-1}$ and summing over $i=1,2,\ldots,k-1$, we obtain
\begin{align}
B_{k}(x,t)&=\frac{x(1-t^{k-1})}{1-t}+x\left(A_{k}(x,t)-S_{k}(x,k,k)t^{k-1}\right)+xB_{k}(x,t)+xC_{k}(x,t)\nonumber\\&+\frac{x}{t}\left(A_{k}(x,t)-A_{k}(x,0)\right)+\frac{x}{t}\left(B_{k}(x,t)-B_{k}(x,0)\right)\label{eq;25b}.
\end{align}
Finally, multiplying (\ref{eq;5c}) by $t^{i-1}$ and summing over $i=1,2,\ldots,k-1$, we obtain
\begin{align}
C_{k}(x,t)&=\frac{x(1-t^{k-1})}{1-t}+xt\left(C_{k}(x,t)-S_{k}(x,k-1,k)t^{k-2}\right)+x\left(A_{k}(x,t)-S_{k}(x,k,k)t^{k-1}\right)\nonumber\\&+xB_{k}(x,t)+xC_{k}(x,t)+\frac{x}{t}\left(A_{k}(x,t)-A_{k}(x,0)\right)\label{eq;5d}.
\end{align}
Taking $\lim_{t\to 1}$ in \eqref{eq;20b}, \eqref{eq;25b}, and \eqref{eq;5d}, we obtain a linear system of three equations in the variables $A_k(x,1),B_k(x,1)$, and $C_k(x,1)$. Solving this system gives \eqref{eq;10a}, \eqref{eq;10b}, and \eqref{eq;10c}.

To obtain \eqref{eq;hak}, notice that equation \eqref{eq1b} is equivalent to $S_k(x) = A_k(x,1) + B_k(x,1)+C_k(x,1)$ and the assertion follows from the previous part.
\end{proof}

\begin{theorem}\label{mth2}
We have
\begin{align*}
S_k(x)&=\frac{a_{1}(x,t_1,t_2)t_{1}^{k}t_{2}^{k}+a_{2}(x,t_1,t_2)t_{1}^{k}-a_2(x,t_2,t_1)t_{2}^{k}+a_{3}(x,t_1,t_2)}
{(t_1-1)(t_2-1)(x^2 - 6x + 1)(b_1(x,t_1,t_2)t_1^kt_2^k - b_2(x,t_1,t_2)t_1^k+b_2(x,t_2,t_1)t_2^k - b_3(x,t_1,t_2))},
\end{align*} where
{\footnotesize
\begin{align}
a_1(x,t_1,t_2)&=(t_2 - t_1)(((t_2 - 1)(t_1 - 1)k - 2t_2t_1)x^5\nonumber\\& + ((t_1 - 1)(1-t_2)(t_1^2t_2 + t_1t_2^2 + t_1^2 + t_1t_2 + t_2^2 + t_1 + t_2 + 3)k + 2t_1^3t_2^2 + 2t_1^2t_2^3 + 2t_2t_1 + 4t_1 + 4t_2)x^4\nonumber\\& + ((t_1 - 1)(t_2 - 1)(t_1^2t_2^2 + 4t_1^2t_2 + 4t_1t_2^2 + 3t_1^2 + 5t_1t_2 + 3t_2^2 + 4t_1 + 4t_2)k - 2t_2t_1(t_1^2t_2 + t_1t_2^2 + 3t_1t_2 + 1))x^3\nonumber\\& + ((t_1 - 1)(1-t_2)(3t_1^2t_2^2 + 3t_1^2t_2 + 3t_1t_2^2 + 7t_1t_2 + 3t_1 + 3t_2)k - 2t_1^3t_2^2 - 2t_1^2t_2^3 + 2t_1^2t_2^2 - 2t_2t_1 - 4t_1 - 4t_2)x^2\nonumber\\& + (3t_2t_1(t_2 - 1)(t_1 - 1)k + 2t_2t_1(t_1^2t_2 + t_1t_2^2 + 3t_1t_2 + 2))x - 2t_1^2t_2^2),\nonumber\\
a_2(x,t_1,t_2)&=(t_1t_2 - 1)((-t_2^2(t_2 - 1)(t_1 - 1)k - 2t_2^2t_1)x^5\nonumber\\& + ((t_1 - 1)(t_2 - 1)(t_1^2t_2^2 + t_1^2t_2 + t_1t_2^2 + t_1t_2 + 3t_2^2 + t_1 + t_2 + 1)k + 2t_1^3t_2 + 4t_1t_2^3 + 2t_2^2t_1 + 2t_1^2 + 4t_2^2)x^4\nonumber\\& + ((t_1 - 1)(1-t_2)(3t_1^2t_2^2 + 4t_1^2t_2 + 4t_1t_2^2 + t_1^2 + 5t_1t_2 + 4t_1 + 4t_2 + 3)k - 2t_1(t_1^2t_2 + 3t_1t_2 + t_2^2 + t_1))x^3\nonumber\\& + ((t_1 - 1)(t_2 - 1)(3t_1^2t_2 + 3t_1t_2^2 + 3t_1^2 + 7t_1t_2 + 3t_1 + 3t_2)k - 2t_1^3t_2 - 4t_1t_2^3 + 2t_1^2t_2 - 2t_2^2t_1 - 2t_1^2 - 4t_2^2)x^2\nonumber\\& + (3t_1t_2(t_1 - 1)(1-t_2)k + 2t_1(t_1^2t_2 + 3t_1t_2 + 2t_2^2 + t_1))x - 2t_1^2t_2),\nonumber\\
a_3(x,t_1,t_2)&=(t_1-t_2)((t_1^2t_2^2(t_1 - 1)(t_2 - 1)k - 2t_1^2t_2^2)x^5 \nonumber\\&+ ((t_1 - 1)(1-t_2)(3t_1^2t_2^2 + t_1^2t_2 + t_1t_2^2 + t_1^2 + t_1t_2 + t_2^2 + t_1 + t_2)k + 4t_1^3t_2^2 + 4t_1^2t_2^3 + 2t_1^2t_2^2 + 2t_1 + 2t_2)x^4\nonumber\\& + ((t_1 - 1)(t_2 - 1)(4t_1^2t_2 + 4t_1t_2^2 + 3t_1^2 + 5t_1t_2 + 3t_2^2 + 4t_1 + 4t_2 + 1)k - 2t_1^2t_2^2 - 6t_1t_2 - 2t_1 - 2t_2)x^3\nonumber\\& + ((t_1 - 1)(1-t_2)(3t_1^2t_2 + 3t_1t_2^2 + 7t_1t_2 + 3t_1 + 3t_2 + 3)k - 4t_1^3t_2^2 - 4t_1^2t_2^3 - 2t_1^2t_2^2 + 2t_1t_2 - 2t_1 - 2t_2)x^2\nonumber\\& + (3t_1t_2(t_1 - 1)(t_2 - 1)k + 4t_1^2t_2^2 + 6t_1t_2 + 2t_1 + 2t_2)x - 2t_1t_2),\nonumber\\
b_1(x, t_1, t_2) &= (t_1 - t_2)((-t_1^2t_2 - t_1t_2^2 - t_1^2 - t_1t_2 - t_2^2 - t_1 - t_2)x^2 + (t_1^2t_2^2 + t_1^2t_2 + t_1t_2^2 + 2t_1t_2 + t_1 + t_2)x - t_2t_1),\nonumber\\
b_2(x, t_1, t_2) &= (t_1t_2 - 1)((t_1^2t_2^2 + t_1^2t_2 + t_1t_2^2 + t_1t_2 + t_1 + t_2 + 1)x^2 + (-t_1^2t_2 - t_1t_2^2 - t_1^2 - 2t_1t_2 - t_1 - t_2)x + t_2t_1),\nonumber\\
b_3(x, t_1, t_2) &= (t_1 - t_2)((-t_1^2t_2 - t_1t_2^2 - t_1^2 - t_1t_2 - t_2^2 - t_1 - t_2)x^2 + (t_1^2t_2 + t_1t_2^2 + 2t_1t_2 + t_1 + t_2 + 1)x - t_2t_1),\nonumber
\end{align}} and
$$t_{1,2} = \frac{x(1-x)-x(1+x)\sqrt{x}}{2x^{2}}+\frac{\sqrt{x(1+x)(1-x)^2\pm2x(x^2-1)\sqrt{x}}}{2x\sqrt{x}}.$$
\end{theorem}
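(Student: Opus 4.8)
The plan is to run, essentially verbatim, the kernel-method scheme behind Theorems~\ref{mth1} and \ref{mth3}, now starting from the three functional equations \eqref{eq;20b}, \eqref{eq;25b}, \eqref{eq;5d} from the proof of Lemma~\ref{lem;3b}. First I would view \eqref{eq;25b} and \eqref{eq;5d} as a linear system in the two series $B_k(x,t)$ and $C_k(x,t)$ (the coefficients being rational in $t$ and $x$) and solve it, expressing both $B_k(x,t)$ and $C_k(x,t)$ through $A_k(x,t)$ and the four boundary series $A_k(x,0)$, $B_k(x,0)$, $S_k(x,k-1,k)$, $S_k(x,k,k)$ --- exactly the unknowns already occurring in \eqref{eq;hak}. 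Substituting this into \eqref{eq;20b} leaves a single functional equation for $A_k(x,t)$ whose kernel --- the coefficient of $A_k(x,t)$, up to a power of $t$ --- is the palindromic quartic
\[
K(t)=\bigl(xt^{2}-(1-x)t+x\bigr)^{2}-x(1+x)^{2}t^{2},
\]
which splits into the two palindromic quadratic factors $xt^{2}-\bigl((1-x)\pm(1+x)\sqrt{x}\bigr)t+x$; the right-hand side $R(x,t)$ is explicit and involves $t^{k}$ together with the four boundary series. Since $A_k,B_k,C_k$ are polynomials in $t$ of degree below $k$, they may be evaluated at any algebraic function of $x$, so no analytic subtlety is involved.

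For the kernel step, note that each quadratic factor of $K$ has equal leading and constant coefficients, so its two roots are reciprocal; writing $t_1,1/t_1$ for the roots of one factor and $t_2,1/t_2$ for those of the other, solving the two quadratics yields precisely the closed forms for $t_{1,2}$ displayed in the statement. Substituting each of $t_1,1/t_1,t_2,1/t_2$ --- all roots of $K$ --- into the functional equation annihilates the left-hand side and produces four linear equations in $A_k(x,0)$, $B_k(x,0)$, $S_k(x,k-1,k)$, $S_k(x,k,k)$; solving this $4\times4$ system expresses each of them as an explicit rational function of $x,t_1,t_2$. Two of the resulting identities can be anticipated: the value-reversal $j\mapsto k+1-j$ of the alphabet preserves the staircase property and leaves the graph untouched, so $s_k(RT_{2,n},i,j)=s_k(RT_{2,n},k+1-i,k+1-j)$ for every $n$; in particular $A_k(x,0)=S_k(x,1,1)=S_k(x,k,k)$ and $B_k(x,0)=S_k(x,2,1)=S_k(x,k-1,k)$, which both serves as a check and shows the system carries only two genuinely independent unknowns. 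Finally, feeding the four expressions into \eqref{eq;hak} and simplifying produces the claimed formula for $S_k(x)$ with $a_1,a_2,a_3,b_1,b_2,b_3$ as listed.

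Conceptually this is no different from the two preceding theorems, so the only real obstacle is the bulk of the computation. The right-hand side $R(x,t)$ is already large, the $4\times4$ system has sizable symbolic entries in $x,t_1,t_2$, and --- most laboriously --- collapsing its solution onto the basis $\{t_1^{k}t_2^{k},\,t_1^{k},\,t_2^{k},\,1\}$ requires repeatedly lowering powers of $t_i$ through the kernel relation (each $t_i$ satisfies $xt_i^{2}+x=\bigl((1-x)\pm(1+x)\sqrt{x}\bigr)t_i$) and then recognizing the $t_1\leftrightarrow t_2$ symmetry of the final ratio --- equivalently, its invariance under $\sqrt{x}\mapsto-\sqrt{x}$ --- which is what makes all the $\sqrt{x}$'s cancel. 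I would carry these manipulations out with a computer algebra system. One last point deserves a line of care: one should check that $t_1,1/t_1,t_2,1/t_2$ are pairwise distinct as formal series, so that the $4\times4$ system is nonsingular and its solution uniquely pinned down, and that the displayed denominators $(t_1-1)(t_2-1)(x^{2}-6x+1)$ and $b_1t_1^{k}t_2^{k}-b_2t_1^{k}+b_2(x,t_2,t_1)t_2^{k}-b_3$ are the honest ones rather than spurious factors that still cancel.
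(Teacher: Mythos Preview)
Your proposal is correct and follows essentially the same kernel-method scheme as the paper: eliminate $B_k$ and $C_k$ from \eqref{eq;20b}--\eqref{eq;5d} to obtain a single functional equation for $A_k(x,t)$ with the palindromic quartic kernel $K(t)=x^{2}t^{4}+2x(x-1)t^{3}+(1-3x+x^{2}-x^{3})t^{2}+2x(x-1)t+x^{2}$, substitute the four roots $t_1,1/t_1,t_2,1/t_2$ to get a $4\times4$ linear system in $A_k(x,0),B_k(x,0),S_k(x,k-1,k),S_k(x,k,k)$, solve, and plug into \eqref{eq;hak}. Your explicit factorization $K(t)=\bigl(xt^{2}-(1-x)t+x\bigr)^{2}-x(1+x)^{2}t^{2}$ and the value-reversal symmetry argument explaining $A_k(x,0)=S_k(x,k,k)$ and $B_k(x,0)=S_k(x,k-1,k)$ are nice additions that the paper does not spell out, but the route is the same.
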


\begin{proof}
First, we solve \eqref{eq;5d} for $C_k(x,t)$ and substitute it into \eqref{eq;25b}. Then we solve the result for $B_k(x,t)$ and substitute it into \eqref{eq;20b}. This gives us the equation
\begin{align}
\frac{K(t)}{t}A_{k}(x,t)&=\frac{x(x^{2}t^{3}-xt^{2}-t+x)}{t}A_{k}(x,0)+x^{2}(t+1)(xt-1)B_{k}(x,0)\nonumber\\&-x^{2}t^{k-1}(t+1)(t-x)S_{k}(x,k-1,k)+xt^{k-1}(t^{3}x-t^{2}-xt+x^{2})S_{k}(x,k,k)\nonumber\\&+\frac{x(-xt^{k+2}+t^{k+1}+xt^{k}-x^{2}t^{k-1}+x^{2}t^{3}-xt^{2}-t+x)}{t-1},\label{eq;77b}
\end{align} where $K(t)=x^{2}t^{4}+2x(x-1)t^{3}+(1-3x+x^{2}-x^{3})t^{2}+2x(x-1)t+x^{2}$. The four roots of the kernel equation $K(t)=0$ are given by $t_1,1/t_1,t_2,$ and $1/t_2$. By substituting these four  roots into \eqref{eq;77b}, we obtain a linear system of four equations in the variables $A_k(x,0), B_k(x,0), S_k(x,k-1,k)$, and $S_k(x,k,k)$. Solving this system we obtain
\begin{align}
&A_k(x,0) = S_k(x,k,k) =\nonumber\\ &\frac{(x-1)(x+1)\left(f_1(x,t_1, t_2)t_1^kt_2^k+f_2(x,t_1, t_2)t_1^k-f_2(x,t_2, t_1)t_2^k+f_3(x,t_1, t_2)\right)}{(t_{1}-1)(t_{2}-1)\left(g_1(x,t_1,t_2)t_1^kt_2^k-g_2(x,t_1,t_2)t_1^k+g_2(x,t_2,t_1)t_2^k-g_3(x,t_1,t_2)\right)},\nonumber\\
&B_k(x,0) = S_k(x,k-1,k) =\nonumber\\
&\frac{q_1(x,t_1, t_2)t_1^kt_2^k +q_2(x,t_1, t_2)t_1^k -q_2(x,t_2, t_1)t_2^k +q_3(x,t_1, t_2)}{x(t_{1}-1)(t_{2}-1)\left(g_1(x,t_1,t_2)t_1^kt_2^k-g_2(x,t_1,t_2)t_1^k+g_2(x,t_2,t_1)t_2^k-g_3(x,t_1,t_2)\right)},
\end{align} where
{\footnotesize
\begin{align}
f_1(x,t_1, t_2) &= (t_2 - t_1)((-t_1 - t_2)x + t_2t_1(t_1t_2 + 1)),\nonumber\\
f_2(x,t_1, t_2) &= t_2(t_1t_2 - 1)(-t_2(t_1t_2 + 1)x + t_1(t_1 + t_2)),\nonumber\\
f_3(x,t_1, t_2) &=t_2t_1(t_2 - t_1)(t_1t_2(t_1 + t_2)x - t_1t_2 - 1),\nonumber\\
g_1(x,t_1,t_2) &= (t_1 - t_2)((-t_1^2t_2 - t_1t_2^2 - t_1^2 - t_1t_2 - t_2^2 - t_1 - t_2)x^2 + (t_1^2t_2^2 + t_1^2t_2 + t_1t_2^2 + 2t_1t_2 + t_1 + t_2)x - t_1t_2),\nonumber\\
g_2(x,t_1,t_2) &= ((t_1t_2 - 1)((t_1^2t_2^2 + t_1^2t_2 + t_1t_2^2 + t_1t_2 + t_1 + t_2 + 1)x^2 + (-t_1^2t_2 - t_1t_2^2 - t_1^2 - 2t_1t_2 - t_1 - t_2)x + t_1t_2),\nonumber\\g_3(x,t_1,t_2)&=(t_1 - t_2)((t_1^2t_2^2 + t_1^2t_2 + t_1t_2^2 + t_1t_2 + t_1 + t_2 + 1)x^2 + (-t_1^2t_2 - t_1t_2^2 - t_1^2 - 2t_1t_2 - t_1 - t_2)x + t_1t_2),\nonumber\\
q_1(x,t_1,t_2)&=(t_{1}-t_{2})(x^{2}+(t_{2}^{3}-t_{2})x-t_{2}^{2})(x^{2}+(t_{1}^{3}-t_{1})x-t_{1}^{2}), \nonumber\\
q_2(x,t_1,t_2)&=(t_{1}t_{2}-1)(t_{2}^{3}x^{2}+(1-t_{2}^{2})x-t_{2})(x^{2}+(t_{1}^{3}-t_{1})x-t_{1}^{2}), \nonumber\\
q_3(x,t_1,t_2)&=(t_{1}-t_{2})(t_{2}^{3}x^{2}+(1-t_{2}^{2})x-t_{2})(t_{1}^{3}x^{2}+(1-t_{1}^{2})x-t_{1}). \nonumber
\end{align}} Substituting these in \eqref{eq;hak}, the assertion follows.
\end{proof}

\begin{example}
    The generating function of the number of staircase $(RT_{2,n}, 3)$-words is given by $$\frac{x(x^{2}+5x+7)}{1-4x-4x^{2}-x^{3}}.$$
\end{example}

\begin{remark}
An alternative approach is the Transfer-matrix Method (e.g., \cite[Section 4.7]{St}), that we demonstrate now on the graph $P_2\times P_n$. Let $$\mathcal{S} = \{ij\in [k]^2\;:\; |i-j|\leq 1\},$$ ordered in some manner, say, lexicographically. Thus, $$\mathcal{S} = \{11, 12, 21, 22,23,\ldots,(k-1)k,kk\}.$$ Set $N=|\mathcal{S}|$. We construct an undirected graph $G$ whose vertex set is $V$. Two vertices $i_1j_1$ and $i_2j_2$ of $G$ are adjacent if $|i_1-i_2|\leq 1$ and $|j_1-j_2|\leq 1$. Let $A$ be the adjacency matrix of $G$. For example, if $k=3$ then $$A=\begin{pmatrix}
1&1&1&1&0&0&0\\
1&1&1&1&1&0&0\\
1&1&1&1&0&1&0\\
1&1&1&1&1&1&1\\
0&1&0&1&1&1&1\\
0&0&1&1&1&1&1\\
0&0&0&1&1&1&1
\end{pmatrix}.$$
We define $F_{ij}(x)=\sum_{n\geq0}(A^n)_{ij}x^n$. Clearly, $s_3(P_2\times P_n) = \sum_{1\leq i,j\leq N} (A^{n-1})_{ij}$. By \cite[Theorem 4.7.2]{St}, we have $$F_{ij}(x)=\frac{(-1)^{i+j}\det(I-x A\;:\;i,j)}{\det(I-x A)}.$$ Thus, the generating function of $s_3(P_2\times P_n)$ is given by  
\begin{align}
\sum_{n\geq 1}s_3(P_2\times P_n)x^n&=x\sum_{1\leq i,j\leq N} \sum_{n\geq 1}(A^{n-1})_{ij}x^{n-1}\nonumber\\&=x\sum_{1\leq i,j\leq N} F_{ij}(x)\nonumber\\&=\frac{x\sum_{1\leq i,j\leq N}(-1)^{i+j}\det(I-x A\;:\;i,j)}{\det(I-x A)}\nonumber\\&=\frac{-x^{6}-2x^{5}+9x^{4}+16x^{3}-15x^{2}-14x+7}{x^{7}+x^{6}-9x^{5}-9x^{4}+15x^{3}+7x^{2}-7x+1}\nonumber\\&=-\frac{x(x^{2}-7)}{x^{3}-x^{2}-5x+1}.
\end{align}

This may be done for every $k$ and for each of the three graph families that we study in this work. It follows that all the generating functions in this work are rational (see \cite{E} for a possible extension of this approach).
\end{remark}


\begin{thebibliography}{9}

\bibitem{B}
A.~Burstein, Enumeration of words with forbidden patterns, University of Pennsylvania, 1998.

\bibitem{C}
G.~J.~Chang, {\it Handbook of Combinatorial Optimization}, Vol.~1, Kluwer Acad.~Publ., 1998.

\bibitem{E}
S.~B.~Ekhad, J.~Quaintance, and D.~Zeilberger, Automatic generation of generating functions for chromatic polynomials for grid graphs (and more general creatures) of fixed (but arbitrary!) width, arXiv preprint arXiv:1103.6206 [math.CO], 2011. Available at  \url{https://arxiv.org/abs/1103.6206}.

\bibitem{K} A.~Knopfmacher, T.~Mansour, A.~Munagi, and H.~Prodinger, Staircase words and Chebyshev polynomials, {\it Appl.~Anal.~Discrete Math.} {\bf 4} (2010), 81--95.

\bibitem{P}
H.~Prodinger,  The kernel method: a collection of examples, S{\'e}m.~Lothar.~Combin {\bf 50} (2004), B50f.

\bibitem{OL}
N.~J.~A.~Sloane, The On-Line   Encyclopedia of Integer   Sequences, OEIS  Foundation  Inc., \url{https://oeis.org}.

\bibitem{St}
R.~P.~Stanley, {\it Enumerative Combinatorics}, Vol.~1, second edition, Cambridge studies in advanced mathematics (2011).



\end{thebibliography}
\end{document}